\numberwithin{equation}{section}
\newtheorem{Theorem}{Theorem}[section]
\newtheorem{Lemma}{Lemma}[section]
\theoremstyle{definition}
\theoremstyle{remark}
\newtheorem{Remark}{Remark}[section]
\newtheorem{Example}{Example}[section]
\newcommand{\sign}{\mathop{\rm sign}\nolimits}
\newcommand{\essinf}{\mathop{\rm ess \, inf}\limits}
\author{A.A. Kon'kov}
\address{Department of Differential Equations,
Faculty of Mechanics and Mathematics,
Mo\-s\-cow Lo\-mo\-no\-sov State University,
Vorobyovy Gory,
Moscow, 119992 Russia}
\email{konkov@mech.math.msu.su}
\author{A.E. Shishkov}
\address{
RUDN University,
Miklukho-Maklaya str. 6,
Moscow, 117198 Russia;
Institute of Applied Mathematics and Mechanics of NAS of Ukraine,
Dobrovol'skogo str. 1, Slavyansk, 84116 Ukraine
}
\email{aeshkv@yahoo.com}
\thanks{The work of the second author is supported by the ``RUDN University Program 5--100''}
\title[On blow-up conditions]{On blow-up conditions for solutions of higher order differential inequalities}
\thanks{}
\keywords{Higher order differential inequalities, Nonlinear inequalities, Entire solutions}
\subjclass{35B44, 35B08, 35J30, 35J70}
\date{}
\begin{document}

\begin{abstract}
For differential inequalities of the form
$$
	\sum_{|\alpha| = m}
	(- 1)^m
	\partial^\alpha
	a_\alpha (x, u)
	\ge
	b (x) |u|^\lambda
	\quad
	\mbox{in } {\mathbb R}^n,
	\:
	n \ge 1,
$$
where $a_\alpha$ and $b$ are some functions,
we obtain conditions guaranteeing that any solution is identically equal to zero.
We construct examples which show that the obtained conditions are sharp.
\end{abstract}

\maketitle

\section{Introduction}
We study solutions of the inequality
\begin{equation}
	\sum_{|\alpha| = m}
	(- 1)^m
	\partial^\alpha
	a_\alpha (x, u)
	\ge
	b (x) |u|^\lambda
	\quad
	\mbox{in } {\mathbb R}^n,
	\label{1.1}
\end{equation}
where $n \ge 1$ and $m \ge 1$ are some integers and $\lambda$ is a real number.
It is assumed that $b$ is a positive measurable function and
\begin{equation}
	|a_\alpha (x, \zeta)| \le a (x) |\zeta|
	\label{1.2}
\end{equation}
with some positive measurable function $a$ 
for almost all $x = (x_1, \ldots, x_n) \in {\mathbb R}^n$ 
and for all $\zeta \in {\mathbb R}$ and $|\alpha| = m$.
As is customary, by $\alpha = (\alpha_1, \ldots, \alpha_n)$ we mean a multi-index.
In so doing,
$|\alpha| = \alpha_1 + \ldots + \alpha_n$
and
$\partial^\alpha = \partial^{|\alpha|} / \partial_{x_1}^{\alpha_1} \ldots \partial_{x_n}^{\alpha_n}$.
Let us also denote by $B_r$ an open ball in ${\mathbb R}^n$ of radius $r > 0$ centered at zero.

A function $u$ is called a solution of inequality~\eqref{1.1} if
$b (x) |u|^\lambda \in L_{1, loc} ({\mathbb R}^n)$
and
$a_\alpha (x, u) \in L_{1, loc} ({\mathbb R}^n)$ for all $|\alpha| = m$ and, moreover,
\begin{equation}
	\int_{{\mathbb R}^n}
	\sum_{|\alpha| = m}
	a_\alpha (x, u)
	\partial^\alpha
	\varphi
	\,
	dx
	\ge
	\int_{{\mathbb R}^n}
	b (x) |u|^\lambda
	\varphi
	\,
	dx
	\label{1.3}
\end{equation}
for any non-negative function $\varphi \in C_0^\infty ({\mathbb R}^n)$.

Our aim is to obtain conditions guaranteeing that every solution of~\eqref{1.1} is trivial. 
Questions treated in our paper were studied earlier by a number of authors
mainly for inequalities of the second order~[1--11, 13--18].
Higher order inequalities have been studied much less.
Until recently, the only effective method for their study was the method described 
in~\cite{GMP, MPbook}.
However, this method loses its exactness in the case where the function $1 / b$ performs 
large oscillations in a neighborhood of infinity.
In particular, it can not be applied to inequalities considered in Example~\ref{E2.2}.

\section{Main results}

\begin{Theorem}\label{T2.1}
Let $\lambda > 1$ and
\begin{equation}
	\int_1^\infty
	r^{(m - n) \lambda + n - 1}
	q (r)
	\,
	dr
	=
	\infty,
	\label{T2.1.1}
\end{equation}
where
$$
	q (r)
	=
	\essinf_{
		B_{\sigma r}
		\setminus
		B_{r / \sigma} 
	}
	a^{- \lambda} b
$$
for some real number $\sigma > 1$.
Then any solution of~\eqref{1.1} is trivial, i.e. $u (x) = 0$ for almost all $x \in {\mathbb R}^n$.
\end{Theorem}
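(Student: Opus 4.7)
The approach is the nonlinear capacity test function method \`a la Mitidieri--Pokhozhaev. The plan is: (i) plug a cutoff into~\eqref{1.3}; (ii) extract a capacity-type estimate involving $q(R)$; (iii) contradict~\eqref{T2.1.1} assuming $u \not\equiv 0$.

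For (i)--(ii), I would fix a smooth radial cutoff $\psi_R$ with $\psi_R\equiv 1$ on $B_{R/\sigma}$, $\mathrm{supp}\,\psi_R \subset B_{\sigma R}$, and $|\partial^\beta \psi_R| \le C R^{-|\beta|}$, substitute $\varphi = \psi_R^k$ into~\eqref{1.3} with $k\ge m\lambda/(\lambda-1)$, and use~\eqref{1.2} to get $\int b|u|^\lambda \varphi\, dx \le \sum_{|\alpha|=m} \int a|u| |\partial^\alpha \varphi|\, dx$. Splitting the integrand as $(b\varphi)^{1/\lambda}|u| \cdot a (b\varphi)^{-1/\lambda}|\partial^\alpha \varphi|$, applying H\"older with exponents $\lambda$ and $\lambda/(\lambda-1)$, absorbing a factor of $(\int b|u|^\lambda \varphi\, dx)^{1/\lambda}$ on the left, and using $a^{-\lambda}b \ge q(R)$ on the annulus $B_{\sigma R}\setminus B_{R/\sigma}$ (where $|\partial^\alpha \varphi|^{\lambda/(\lambda-1)} \varphi^{-1/(\lambda-1)}$ is supported and bounded by $C R^{-m\lambda/(\lambda-1)}$) would yield
$$
F(R) := \int_{B_{R/\sigma}} b|u|^\lambda\, dx \le C R^{n - m\lambda/(\lambda-1)} q(R)^{-1/(\lambda-1)},
$$
equivalently $F(R)^{\lambda-1}\, q(R)\, R^{(m-n)\lambda+n-1} \le C/R$.

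For (iii), if $u\not\equiv 0$ then $F(R)\ge F_0>0$ for all $R$ larger than some $R_0$, but this alone gives only the logarithmic bound $\int_{R_0}^R q(r) r^{(m-n)\lambda+n-1}\,dr \le C F_0^{-(\lambda-1)} \log(R/R_0)$, which does \emph{not} yet contradict~\eqref{T2.1.1}. The closing step requires a sharper capacity estimate obtained by using an annular cutoff (supported in $B_{\sigma R}\setminus B_{R/\sigma}$ and equal to $1$ on a narrower middle annulus) and then summing the resulting inequalities along a dyadic chain of scales $R_j = \sigma^{2j}$. Exploiting $\lambda > 1$, the summed inequalities should convert, after comparison with the integral, into an absolute bound on $\int q(r) r^{(m-n)\lambda+n-1}\, dr$ in terms of $F_0^{-(\lambda-1)}$, contradicting~\eqref{T2.1.1}. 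Hence $F_0 = 0$, and since $b > 0$, this forces $u \equiv 0$ almost everywhere.

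The main obstacle is precisely this last step. Condition~\eqref{T2.1.1} is a \emph{critical} threshold---the integrand sits right at the edge of integrability---so the single-scale capacity estimate is only just insufficient and a naive iteration blows up the constants. Organising the annular cutoffs and the dyadic summation so that (a) constants from consecutive scales do not accumulate and (b) the resulting integral inequality carries the exact weight $r^{(m-n)\lambda+n-1}$ appearing in~\eqref{T2.1.1} is the real technical heart of the proof, and is where the sharpness of the condition is felt.
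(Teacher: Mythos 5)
Your steps (i)--(ii) are sound and correspond to the first half of the paper's argument: testing \eqref{1.3} with a cutoff, using \eqref{1.2} and H\"older with exponents $\lambda$ and $\lambda/(\lambda-1)$ is exactly how the paper obtains its basic estimate (Lemma~\ref{L3.1}). But the proof has a genuine gap at step (iii), and you have in fact located it yourself: you write that the summed inequalities ``should convert\dots into an absolute bound'' contradicting \eqref{T2.1.1}, without saying how. This is not a technicality about organising cutoffs and constants. The route you sketch --- absorb the $(\int b|u|^\lambda\varphi)^{1/\lambda}$ factor, obtain $F(R)^{\lambda-1}q(R)R^{(m-n)\lambda+n-1}\le C/R$, then refine with annular cutoffs and dyadic summation --- is the classical Mitidieri--Pokhozhaev scheme, and the paper states explicitly in the introduction that this method fails for the class of weights covered by Theorem~\ref{T2.1} (it loses exactness when $1/b$ oscillates strongly, as in Example~\ref{E2.2}, where $q(r)$ vanishes on long runs of scales). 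The standard critical-case refinement (not absorbing, using finiteness of $\int b|u|^\lambda$ and letting the annulus integral tend to zero) needs the capacity factor to stay bounded at \emph{every} scale, which an integral condition like \eqref{T2.1.1} does not provide.

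What actually closes the argument in the paper is different in kind: one keeps the estimate in the un-absorbed, fully nonlinear form
$J(r_2)-J(r_1)\ge C\,(r_2-r_1)^{\lambda m}\,r_1^{(1-\lambda)n}\,h\,J^\lambda(r_1)$
for $J(r)=\int_{B_r}b|u|^\lambda\,dx$, and treats it as a difference inequality for the monotone function $J$ along the scales $r_i=\sigma^{i/2}r_0$. Splitting the indices into those where $J$ at least doubles and those where it does not, one shows that in the first regime the increments of $J^{(1/\lambda-1)/m}$, and in the second the increments of $J^{1-\lambda}$, each dominate $C\int r^{(m-n)\lambda+n-1}q(r)\,dr$ over the corresponding scale interval (the first regime needs an extra sub-iteration and the elementary integral inequality of Lemma~\ref{L3.4}). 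Since both $J^{(1/\lambda-1)/m}$ and $J^{1-\lambda}$ are positive and decreasing in $r$, the two telescoping sums are bounded by $J^{(1/\lambda-1)/m}(r_0)$ and $J^{1-\lambda}(r_0)$ respectively, which contradicts \eqref{T2.1.1} once $J(r_0)>0$. Without this (or an equivalent) mechanism for converting the scale-by-scale estimate into a convergent telescoping sum, your argument does not reach the stated conclusion.
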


\begin{Theorem}\label{T2.2}
Let $\lambda > 1$ and~\eqref{T2.1.1} be valid, where
\begin{equation}
	q (r)
	=
	\left(
		\frac{1}{r^n}
		\int_{
			B_{\sigma r}
			\setminus
			B_{r / \sigma} 
		}
		a^{\lambda / (\lambda - 1)} (x)
		b^{- 1 / (\lambda - 1)} (x)
		\,
		dx
	\right)^{1 - \lambda}
	\label{T2.2.1}
\end{equation}
for some real number $\sigma > 1$.
Then any solution of~\eqref{1.1} is trivial.
\end{Theorem}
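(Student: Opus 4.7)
The plan is to derive a weighted integral inequality via Hölder duality and then construct a radial test function whose derivatives are tuned exactly to the weight defining $q$, so that (T2.1.1) becomes the obstruction to $u\not\equiv 0$.

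First I would combine (1.2) with (1.3): for any non-negative $\varphi\in C_0^\infty({\mathbb R}^n)$,
\[
\int b|u|^\lambda\varphi\,dx \;\le\; \sum_{|\alpha|=m}\int a(x)|u|\,|\partial^\alpha\varphi|\,dx.
\]
Splitting $a|u|\,|\partial^\alpha\varphi| = \bigl(b^{1/\lambda}|u|\varphi^{1/\lambda}\bigr)\bigl(a\,b^{-1/\lambda}\varphi^{-1/\lambda}|\partial^\alpha\varphi|\bigr)$, applying H\"older with exponents $\lambda$ and $\lambda/(\lambda-1)$, and absorbing the factor $\bigl(\int b|u|^\lambda\varphi\bigr)^{1/\lambda}$ to the left, I obtain the working inequality
\[
\int b|u|^\lambda\varphi\,dx \;\le\; C \int a^{\lambda/(\lambda-1)}b^{-1/(\lambda-1)}\sum_{|\alpha|=m}|\partial^\alpha\varphi|^{\lambda/(\lambda-1)}\varphi^{-1/(\lambda-1)}\,dx.
\]

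Set $p:=m\lambda/(\lambda-1)$ and let $k$ be an integer with $k\ge p$. For each $R>1$ I would take the radial test function $\varphi_R(x):=\rho(|x|)^k/\rho(1)^k$, where
\[
\rho(r):=\int_r^R \widetilde q(s)^{1/m}\, s^{(m-n)(\lambda-1)/m}\,ds\quad\text{for } 1\le r\le R,
\]
$\rho\equiv\rho(1)$ on $[0,1]$, $\rho\equiv 0$ on $[R,\infty)$, mollified near $r=1,R$. Here $\widetilde q$ is a smooth log-mollification of $q$ (i.e. an average of $q$ on dyadic annuli in $\log s$) satisfying $|\widetilde q^{(j)}|\le C_j\widetilde q/r^j$ and preserving (T2.1.1) up to constants. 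Then $\varphi_R\equiv 1$ on $B_1$. A Fa\`a di Bruno expansion identifies the leading contribution to $|\partial^\alpha\rho^k|^{\lambda/(\lambda-1)}\rho^{-k/(\lambda-1)}$ as $C\,\rho^{k-p}|\rho'|^p$; the exponents defining $\rho$ are tailored so that $|\rho'(r)|^p=\widetilde q(r)^{\lambda/(\lambda-1)}\,r^{(m-n)\lambda}$. Plugging this into the previous display and estimating the $x$-integral by a dyadic decomposition --- using the bound $\int_{B_{\sigma\tau}\setminus B_{\tau/\sigma}} a^{\lambda/(\lambda-1)}b^{-1/(\lambda-1)}\,dx \le \tau^n q(\tau)^{-1/(\lambda-1)}$ that is the content of (T2.2.1) --- the right-hand side collapses, after cancellations, to
\[
f(1):=\int_{B_1} b|u|^\lambda\,dx \;\le\; \frac{C}{\rho(1)^p}\int_1^R q(r)\, r^{(m-n)\lambda+n-1}\,dr.
\]

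Setting $A_j:=q(\sigma^j)^{1/m}\sigma^{j((m-n)\lambda+n)/m}$, the construction yields $\rho(1)\asymp\sum_j A_j$ and $\int_1^R q(r)\,r^{(m-n)\lambda+n-1}\,dr\asymp\sum_j A_j^m$ as $R\to\infty$. Hypothesis (T2.1.1) forces $\sum A_j^m\to\infty$; since $m\ge 1$, the implication $\sum A_j<\infty\Rightarrow\sum A_j^m<\infty$ makes $\rho(1)=\sum A_j\to\infty$ as well. The elementary bound $\sum A_j^m\le(\max_i A_i)^{m-1}\sum A_j\le\rho(1)^m$, combined with $p>m$ (a consequence of $\lambda>1$), gives $f(1)\le C\,\rho(1)^{m-p}\to 0$, so $f(1)=0$. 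Repeating the argument with $B_1$ replaced by $B_{R_0}$ for arbitrary $R_0>0$ yields $u=0$ a.e.\ on ${\mathbb R}^n$. The main obstacle I foresee lies in controlling the subleading Fa\`a di Bruno terms --- those involving higher derivatives of $\rho$, hence of $\widetilde q$; they must be absorbed into the leading term after the dyadic-annulus estimate, which requires the mollified $\widetilde q$ to enjoy sharp derivative bounds while still satisfying (T2.1.1). A secondary technical point is the routine comparability of the continuous integral in (T2.1.1) with its dyadic counterpart, which is insensitive to the particular $\sigma$.
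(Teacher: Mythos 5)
Your strategy is the Mitidieri--Pohozaev nonlinear capacity method with a weight-adapted radial test function, and the obstacle you flag at the end --- controlling the subleading Fa\`a di Bruno terms and the interaction of the mollified $\widetilde q$ with the true weight --- is not a technical point but the place where the argument genuinely breaks. The paper is explicitly written to get around this: the authors note in the introduction that the capacity method of Mitidieri--Pohozaev ``loses its exactness'' when $1/b$ oscillates strongly and ``can not be applied'' to situations such as Example~2.2, which is nevertheless covered by the theorem you are proving. Concretely, the hypothesis~\eqref{T2.1.1} with $q$ given by~\eqref{T2.2.1} allows $q(\tau)=0$ (equivalently $\int_{B_{\sigma\tau}\setminus B_{\tau/\sigma}}a^{\lambda/(\lambda-1)}b^{-1/(\lambda-1)}=\infty$, see Remark~2.1) on a set of scales of infinite measure, provided the surviving scales still make the integral diverge. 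Your dyadic estimate multiplies $\widetilde q(\tau)^{\lambda/(\lambda-1)}$ by $\tau^n q(\tau)^{-1/(\lambda-1)}$ and needs the product to be $\lesssim \tau^n q(\tau)$; this requires $\widetilde q\lesssim q$ scale by scale. But a log-mollification satisfying $|\widetilde q^{(j)}|\le C_j\widetilde q/r^j$ cannot lie below an oscillating $q$: any $\widetilde q$ with that derivative bound which vanishes at one point of an interval vanishes on the whole interval (Gronwall), so $\widetilde q$ must spread mass into annuli where $q(\tau)\ll\widetilde q(\tau)$ or $q(\tau)=0$, and there your right-hand side is enormous or infinite. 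If instead you abandon the derivative bounds and make $\rho$ locally constant off the good annuli, the subleading terms take over: on an annulus where $\rho$ decreases by $\delta_\tau\ll\rho(1)$, the term $\rho^{k-1}|\rho'|\,r^{1-m}$ dominates $\rho^{k-m}|\rho'|^m$, and after raising to the power $\lambda/(\lambda-1)$ and multiplying by $\varphi^{-1/(\lambda-1)}$ it produces a negative power of $q(\tau)$ that is not summable under~\eqref{T2.1.1}. So the displayed bound $f(1)\le C\rho(1)^{-p}\int_1^R q\,r^{(m-n)\lambda+n-1}dr$ is not attainable by this route in the generality of the theorem.

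The paper avoids the global test function entirely. It proves a purely local Caccioppoli-type estimate on a single thin annulus (Lemma~3.1), giving $J(r_2)-J(r_1)\ge C(r_2-r_1)^{\lambda m}r_1^{(1-\lambda)n}\sup_{(r_1,r_2)}h\;J^\lambda(r_1)$ for $J(r)=\int_{B_r}b|u|^\lambda dx$, and then treats this as a nonlinear ordinary differential inequality for $J$ along the scales $r_i=\sigma^{i/2}r_0$, splitting into scales where $J$ at least doubles (Lemma~3.2, integrating $J^{(1/\lambda-1)/m}$) and scales where it does not (Lemma~3.3, integrating $J^{1-\lambda}$), with Lemma~3.4 converting suprema over annuli into integrals of $q$. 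Because the information is accumulated multiplicatively through $J$, the ``bad'' annuli where $h$ is tiny or zero contribute nothing and simply drop out of the sums, whereas your global integral is forced to see them. If you want a proof along your lines you would have to restrict to weights for which $q$ is doubling across dyadic scales; to prove the theorem as stated you need the local iteration scheme (or an equivalent).
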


\begin{Remark}\label{R2.1} In relation~\eqref{T2.2.1}, we do not exclude that
$$
	\int_{
		B_{\sigma r}
		\setminus
		B_{r / \sigma} 
	}
	a^{\lambda / (\lambda - 1)} (x)
	b^{- 1 / (\lambda - 1)} (x)
	\,
	dx
	=
	\infty
$$
for some real numbers $r > 0$. Since $\lambda > 1$, in this case, we have $q (r) = 0$.
\end{Remark}

Theorems~\ref{T2.1} and~\ref{T2.2} are proved in Section~\ref{proofOfTheorems}.

\begin{Example}\label{E2.1}
Consider the inequality
\begin{equation}
	(- 1)^k \Delta^k u \ge b (x) |u|^\lambda
	\quad
	\mbox{in } {\mathbb R}^n,
	\label{E2.1.1}
\end{equation}
where $\lambda > 1$ and $b$ is a positive measurable function such that
\begin{equation}
	b (x) \ge b_0 |x|^l,
	\quad
	b_0 = const > 0,
	\label{E2.1.2}
\end{equation}
for almost all $x$ from a neighborhood of infinity.
According to Theorem~\ref{T2.1}, if
\begin{equation}
	l \ge (n - 2 k) \lambda - n,
	\label{E2.1.3}
\end{equation}
then any solution of~\eqref{E2.1.1} is trivial.
It can be seen that~\eqref{E2.1.3} coincides with the well-known blow-up condition
given in~\cite[Example~5.2]{MPbook}.

Now, assume that, in a neighborhood of infinity, the function $b$ satisfy the inequality
\begin{equation}
	b (x) 
	\ge 
	b_0 
	|x|^{(n - 2 k) \lambda - n}
	\log^\nu |x|,
	\quad
	b_0 = const > 0.
	\label{E2.1.4}
\end{equation}
In other words, we examine the critical exponent $l = (n - 2 k) \lambda - n$ in~\eqref{E2.1.2}.
If
\begin{equation}
	\nu \ge - 1,
	\label{E2.1.5}
\end{equation}
then in accordance with Theorem~\ref{T2.1} any solution of~\eqref{E2.1.1} is trivial.

In the case of $n > 2 k$, condition~\eqref{E2.1.5} is exact. Really, let $\nu < - 1$. We put
$$
	w _0 (r) = (2 + r)^{- n} \log^{- (\nu + \lambda) / (\lambda - 1)} (2 + r)
$$
and
$$
	w_i (r)
	=
	\frac{1}{n - 2}
	\int_0^r
	\left(
		\frac{\rho}{r}
	\right)^{n - 2}
	\rho
	w_{i - 1} (\rho)
	\,
	d\rho
	+
	\frac{1}{n - 2}
	\int_r^\infty
	\rho
	w_{i - 1} (\rho)
	\,
	d\rho,
	\quad
	i = 1, \ldots, k.
$$
For all $1 \le i \le k$ we obviously have
$$
	- \left(
		\frac{d^2}{dr^2} + \frac{n - 1}{r} \frac{d}{dr}
	\right)
	w_i
	=
	w_{i - 1}
$$
and
$$
	w_i (r) \sim r^{2 i - n} \log^{- (1 + \nu) / (\lambda - 1)} r
	\quad
	\mbox{as } r \to \infty,
$$
i.e. 
$$
	c_1 r^{2 i - n} \log^{- (1 + \nu) / (\lambda - 1)} r
	\le
	w_i (r) 
	\le 
	c_2 r^{2 i - n} \log^{- (1 + \nu) / (\lambda - 1)} r
$$
with some constants $c_1 > 0$ and $c_2 > 0$ for all $r$ in a neighborhood of infinity.
Thus, taking
\begin{equation}
	u (x) = \varepsilon w_k (|x|),
	\label{E2.1.6}
\end{equation}
where $\varepsilon > 0$ is a sufficiently small real number, 
we obtain nontrivial solution of~\eqref{E2.1.1} with some positive function $b$
for which~\eqref{E2.1.4} holds.
\end{Example}

\begin{Example}\label{E2.2}
In~\eqref{E2.1.1}, let the positive measurable function $b$ satisfy the inequality
\begin{equation}
	b (x)
	\ge
	b_0
	\sum_{i = 1}^\infty
	\chi_{\omega_i} (x)
	|x|^{(n - 2 k) \lambda - n}
	\log^\nu |x|,
	\quad
	b_0 = const > 0,
	\label{E2.2.1}
\end{equation}
for almost all $x$ from a neighborhood of infinity, 
where $\chi_{\omega_i}$ is the characteristic function of the set 
$\omega_i = \{ x \in {\mathbb R}^n : 2^{2 i} < |x| < 2^{2 i + 1} \}$, i.e.
$$
	\chi_{\omega_i} (x)
	=
	\left\{
		\begin{aligned}
			&
			1,
			&
			&
			x \in \omega_i,
			\\
			&
			0,
			&
			&
			x \in {\mathbb R}^n \setminus \omega_i.
		\end{aligned}
	\right.
$$
In addition, let $\lambda > 1$ and~\eqref{E2.1.5} be valid. 
Then, applying Theorem~\ref{T2.1} with $\sigma = 2^{1 / 4}$, 
we obtain that any solution of~\eqref{E2.1.1} is trivial.
As in Example~\ref{E2.1}, condition~\eqref{E2.1.5} is exact.
Really, the right-hand side of~\eqref{E2.2.1} does not exceed the right-hand side of~\eqref{E2.1.4}.
Thus, in the case of $\nu < -1$, formula~\eqref{E2.1.6} provides us again with the desirable solution.

In general, let the positive function $b$ satisfy the inequality
$$
	b (x)
	\ge
	\sum_{i = 1}^\infty
	b_i
	\chi_{
		B_{2 r_i}
		\setminus
		B_{r_i}
	} 
	(x),
$$
where $b_i \ge 0$ and $r_i > 0$ are some real numbers 
with $r_{i + 1} \ge 2 r_i$, $i = 1, 2, \ldots$.
According to Theorem~\ref{T2.1}, if $\lambda > 1$ and
$$
	\sum_{i = 1}^\infty
	r_i^{(m - n) \lambda + n}
	b_i
	=
	\infty,
$$
then any solution of~\eqref{E2.1.1} is trivial.
\end{Example}

\section{Proof of Theorems~\ref{T2.1} and~\ref{T2.2}}\label{proofOfTheorems}

From now on we assume that $u$ is a solution of~\eqref{1.1} and, moreover, 
$\lambda > 1$ and $\sigma > 1$ are some real numbers.
By $C$ we mean various positive constants that can depend only on $n$, $m$, $\lambda$, and $\sigma$.
Let us denote
$$
	J (r)
	=
	\int_{B_r}
	b (x)
	|u|^\lambda
	\,
	dx
$$
and
$$
	h (r)
	=
	\left(
		\frac{1}{r^n}
		\int_{
			B_{\sqrt{\sigma} r}
			\setminus
			B_{r / \sqrt{\sigma}} 
		}
		a^{\lambda / (\lambda - 1)} (x)
		b^{- 1 / (\lambda - 1)} (x)
		\,
		dx
	\right)^{1 - \lambda},
$$
where $r$ runs over the set of positive real numbers.
As in the case of the function $q$, it is assumed that $h (r) = 0$ if
$$
	\int_{
		B_{\sqrt{\sigma} r}
		\setminus
		B_{r / \sqrt{\sigma}} 
	}
	a^{\lambda / (\lambda - 1)} (x)
	b^{- 1 / (\lambda - 1)} (x)
	\,
	dx
	=
	\infty.
$$

\begin{Lemma}\label{L3.1}
For arbitrary real numbers $0 < r_1 < r_2$ such that $r_2 \le \sqrt{\sigma} r_1$
the following relationship holds:
\begin{equation}
	J (r_2) - J (r_1)
	\ge
	C
	(r_2 - r_1)^{\lambda m}
	r_1^{(1 - \lambda) n}
	\sup_{(r_1, r_2)}
	h
	\,
	J^\lambda (r_1).
	\label{L3.1.1}
\end{equation}
\end{Lemma}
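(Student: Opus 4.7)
The strategy is the standard test-function / nonlinear-capacity argument applied to the weak form \eqref{1.3}: plug a power of a cut-off into \eqref{1.3}, apply H\"older, and then convert the resulting capacity-type integral into $h$ via an annular inclusion. I would take $\psi\in C_0^\infty(B_{r_2})$ with $\psi\equiv 1$ on $B_{r_1}$, $0\le\psi\le 1$, $|\partial^\beta\psi|\le C(r_2-r_1)^{-|\beta|}$ for $|\beta|\le m$, and fix a real $s\ge m\lambda/(\lambda-1)$. Substituting $\varphi=\psi^s$ in \eqref{1.3}, the Fa\`a di Bruno-type bound $|\partial^\alpha\psi^s|\le C\psi^{s-m}(r_2-r_1)^{-m}$ for $|\alpha|=m$ together with \eqref{1.2} yield
$$
J(r_1)\le\int b(x)|u|^\lambda\psi^s\,dx\le\frac{C}{(r_2-r_1)^m}\int_{B_{r_2}\setminus B_{r_1}}a(x)|u|\,\psi^{s-m}\,dx,
$$
the restriction of the last integral to the annulus coming from the fact that $\partial^\alpha\psi^s$ vanishes on $B_{r_1}$.

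Next I would apply H\"older's inequality with exponents $\lambda$ and $\lambda/(\lambda-1)$ via the splitting
$$
a(x)|u|\,\psi^{s-m}=\bigl(b^{1/\lambda}(x)|u|\,\psi^{s/\lambda}\bigr)\cdot\bigl(a(x)b^{-1/\lambda}(x)\psi^{s-m-s/\lambda}\bigr).
$$
The choice $s\ge m\lambda/(\lambda-1)$ makes the exponent $s-m-s/\lambda$ non-negative, so, using $\psi\le 1$, the first factor yields $(J(r_2)-J(r_1))^{1/\lambda}$ and the second yields $I^{(\lambda-1)/\lambda}$, where $I:=\int_{B_{r_2}\setminus B_{r_1}}a^{\lambda/(\lambda-1)}b^{-1/(\lambda-1)}\,dx$. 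Raising to the $\lambda$-th power, I get
$$
J(r_2)-J(r_1)\ge C(r_2-r_1)^{\lambda m}\,I^{1-\lambda}\,J^\lambda(r_1).
$$

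To finish, I would bound $I^{1-\lambda}$ from below by $r_1^{(1-\lambda)n}\sup_{(r_1,r_2)}h$ up to a constant. The hypothesis $r_2\le\sqrt\sigma\,r_1$ yields the inclusion $B_{r_2}\setminus B_{r_1}\subset B_{\sqrt\sigma\,r}\setminus B_{r/\sqrt\sigma}$ for every $r\in(r_1,r_2)$, whence $I\le r^n h(r)^{-1/(\lambda-1)}$ and $I^{1-\lambda}\ge r^{(1-\lambda)n}h(r)$. Since $r_1<r\le\sqrt\sigma\,r_1$, one has $r^{(1-\lambda)n}\ge\sigma^{(1-\lambda)n/2}\,r_1^{(1-\lambda)n}$, and taking the supremum over $r\in(r_1,r_2)$ delivers \eqref{L3.1.1}.

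The main technical point is calibrating $s$ so that (a) the cut-off derivative bound retains the $\psi^{s-m}$ factor (allowing the integration to be restricted to the annulus), and (b) the H\"older splitting produces a non-negative power of $\psi$ on the capacity-type integrand; everything else is the H\"older estimate, the elementary annular inclusion, and the routine comparability of $r$ with $r_1$ for $r\in(r_1,r_2)$.
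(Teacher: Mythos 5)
Your argument is correct and essentially identical to the paper's: a cut-off test function in \eqref{1.3}, H\"older's inequality with exponents $\lambda$ and $\lambda/(\lambda-1)$ against the weight $b$, and the annular inclusion $B_{r_2}\setminus B_{r_1}\subset B_{\sqrt{\sigma}\,r}\setminus B_{r/\sqrt{\sigma}}$ (valid because $r_2\le\sqrt{\sigma}\,r_1$) to convert the capacity integral into $r_1^{(1-\lambda)n}\sup_{(r_1,r_2)}h$. The only differences are cosmetic: the paper uses the plain cut-off $\varphi$ rather than a power $\psi^{s}$, since both arguments discard the cut-off weights via $\psi\le1$ after H\"older anyway (if you keep the power, take $s$ a sufficiently large integer so that $\psi^{s}\in C_0^\infty$), and the paper dispatches the degenerate case of an infinite capacity integral explicitly, which your final step also covers because then $h$ vanishes on $(r_1,r_2)$.
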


\begin{proof}
Let $\varphi_0 \in C^\infty ({\mathbb R})$ be a non-decreasing function satisfying the conditions
$$
	\left.
		\varphi_0
	\right|_{
		(- \infty, 0]
	}
	=
	0
	\quad
	\mbox{and}
	\quad
	\left.
		\varphi_0
	\right|_{
		[1, \infty)
	}
	=
	1.
$$
We put
$$
	\varphi (x)
	=
	\varphi_0
	\left(
		\frac{r_2 - |x|}{r_2 - r_1}
	\right).
$$
It is easy to see that
$$
	\int_{{\mathbb R}^n}
	b (x) |u|^\lambda
	\varphi
	\,
	dx
	\ge
	\int_{
		B_{r_1}
	}
	b (x)
	|u|^\lambda
	\,
	dx
$$
and
$$
	\left|
		\int_{{\mathbb R}^n}
		\sum_{|\alpha| = m}
		a_\alpha (x, u)
		\partial^\alpha
		\varphi
		\,
		dx
	\right|
	\le
	\frac{
		C
	}{
		(r_2 - r_1)^m
	}
	\int_{
		B_{r_2} \setminus B_{r_1}
	}
	a (x)
	|u|
	\,
	dx.
$$
Combining these estimates with~\eqref{1.3}, we obtain
\begin{equation}
	\int_{
		B_{r_2} \setminus B_{r_1}
	}
	a (x)
	|u|
	\,
	dx
	\ge
	C
	(r_2 - r_1)^m
	\int_{
		B_{r_1}
	}
	b (x)
	|u|^\lambda
	\,
	dx.
	\label{PL3.1.1}
\end{equation}
If
$$
	\int_{
		B_{r_2}
		\setminus
		B_{r_1} 
	}
	a^{\lambda / (\lambda - 1)} (x)
	b^{- 1 / (\lambda - 1)} (x)
	\,
	dx
	=
	\infty,
$$
then~\eqref{L3.1.1} is obvious since $h (r) = 0$ for all $r \in (r_1, r_2)$.
Hence, we have to consider only the case where
$$
	\int_{
		B_{r_2}
		\setminus
		B_{r_1} 
	}
	a^{\lambda / (\lambda - 1)} (x)
	b^{- 1 / (\lambda - 1)} (x)
	\,
	dx
	<
	\infty.
$$
In this case, by the H\"older inequality, we have
\begin{align*}
	\int_{
		B_{r_2} \setminus B_{r_1}
	}
	a (x)
	|u|
	\,
	dx
	\le
	{}
	&
	\left(
		\int_{
			B_{r_2}
			\setminus
			B_{r_1} 
		}
		a^{\lambda / (\lambda - 1)} (x)
		b^{- 1 / (\lambda - 1)} (x)
		\,
		dx
	\right)^{(\lambda - 1) / \lambda}
	\\
	{}
	&
	\times
	\left(
		\int_{
			B_{r_2}
			\setminus
			B_{r_1} 
		}
		b (x)
		|u|^\lambda
		\,
		dx
	\right)^{1 / \lambda}.
\end{align*}
According to~\eqref{PL3.1.1}, this implies the estimate
\begin{align*}
	\int_{
		B_{r_2}
		\setminus
		B_{r_1} 
	}
	b (x)
	|u|^\lambda
	\,
	dx
	\ge
	{}
	&
	C
	(r_2 - r_1)^{\lambda m}
	\left(
		\int_{
			B_{r_2}
			\setminus
			B_{r_1} 
		}
		a^{\lambda / (\lambda - 1)} (x)
		b^{- 1 / (\lambda - 1)} (x)
		\,
		dx
	\right)^{1 - \lambda}
	\\
	{}
	&
	\times
	\left(
		\int_{
			B_{r_1}
		}
		b (x)
		|u|^\lambda
		\,
		dx
	\right)^\lambda
\end{align*}
which yields relationship~\eqref{L3.1.1} immediately.
\end{proof}

\begin{Lemma}\label{L3.2}
Let $0 < r_1 < r_2$ be some real numbers such that
$r_2 \le \sqrt{\sigma} r_1$ and $2 J (r_1) \le J (r_2)$.
If $J (r_1) > 0$, then
$$
	J^{(1 / \lambda - 1) / m} (r_1)
	-
	J^{(1 / \lambda - 1) / m} (r_2)
	\ge
	C
	\int_{r_1}^{r_2}
	r^{(1 / \lambda - 1) n / m}
	h^{1 / (\lambda m)} (r)
	\,
	dr.
$$
\end{Lemma}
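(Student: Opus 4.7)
\emph{Plan.} I apply Lemma~\ref{L3.1} repeatedly on a dyadic partition of $[r_1,r_2]$ defined by the level sets of $J$, sum a geometric series, and dominate the resulting Riemann sum by the integral. The intermediate target is
$$
J^{(1/\lambda-1)/m}(r_1)\ge C\int_{r_1}^{r_2}r^{(1/\lambda-1)n/m}h^{1/(\lambda m)}(r)\,dr,
$$
from which the displayed statement follows via the doubling hypothesis.

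\emph{Dyadic partition.} The function $r\mapsto J(r)$ is non-decreasing and absolutely continuous (since $b|u|^\lambda\in L_{1,loc}(\mathbb{R}^n)$), so by the intermediate value theorem there exist points $r_1=\rho_0<\rho_1<\cdots<\rho_N\le\rho_{N+1}:=r_2$ with $J(\rho_k)=2^k J(r_1)$ for $0\le k\le N$ and $J(\rho_{N+1})<2^{N+1}J(r_1)$, where $N$ is the largest integer with $2^N J(r_1)\le J(r_2)$ (so $N\ge 1$ by hypothesis). The size constraint $\rho_{k+1}\le r_2\le\sqrt{\sigma}\,r_1\le\sqrt{\sigma}\,\rho_k$ ensures Lemma~\ref{L3.1} applies on each $[\rho_k,\rho_{k+1}]$.

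\emph{Block estimate and summation.} Substituting $J(\rho_{k+1})-J(\rho_k)\le 2^{k+1}J(r_1)$ and $J(\rho_k)=2^kJ(r_1)$ into Lemma~\ref{L3.1} and taking $(\lambda m)$-th roots yields the one-block bound
$$
(\rho_{k+1}-\rho_k)\,\rho_k^{(1/\lambda-1)n/m}\,\Bigl[\sup_{(\rho_k,\rho_{k+1})}h\Bigr]^{1/(\lambda m)}\le C\,2^{-k(\lambda-1)/(\lambda m)}\,J^{(1/\lambda-1)/m}(r_1).
$$
Since the exponent $(1/\lambda-1)n/m$ is negative, the map $r\mapsto r^{(1/\lambda-1)n/m}$ is decreasing, and the left-hand side dominates $\int_{\rho_k}^{\rho_{k+1}}r^{(1/\lambda-1)n/m}h^{1/(\lambda m)}(r)\,dr$. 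Summing over $k=0,\ldots,N$ and using the convergent geometric series $\sum_{k\ge 0}2^{-k(\lambda-1)/(\lambda m)}$ (here $\lambda>1$ is essential) gives the intermediate target. Because $(1/\lambda-1)/m<0$, the doubling condition $J(r_2)\ge 2J(r_1)$ then yields
$$
J^{(1/\lambda-1)/m}(r_1)-J^{(1/\lambda-1)/m}(r_2)\ge\bigl(1-2^{(1/\lambda-1)/m}\bigr)J^{(1/\lambda-1)/m}(r_1),
$$
and absorbing this positive factor into $C$ completes the proof.

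\emph{Main obstacle.} The argument is an essentially standard Pokhozhaev/Keller--Osserman type iteration; the only subtlety is the terminal block $[\rho_N,r_2]$, on which $J(r_2)$ is not a clean dyadic multiple of $J(r_1)$. One relies on the crude bound $J(r_2)\le 2^{N+1}J(r_1)$, which produces only an extra factor of $2^{1/(\lambda m)}$ that is harmlessly absorbed into $C$.
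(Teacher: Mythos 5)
Your argument is correct and follows essentially the same route as the paper: the same dyadic partition of $[r_1,r_2]$ by the level sets $J(\rho_k)=2^kJ(r_1)$, the same per-block application of Lemma~\ref{L3.1}, and the same domination of the block integral using the monotonicity of $r\mapsto r^{(1/\lambda-1)n/m}$. The only (cosmetic) difference is that you close the sum with a convergent geometric series in $k$ and invoke the doubling hypothesis once at the end, whereas the paper rewrites each block bound as a telescoping difference $J^{(1/\lambda-1)/m}(\rho_k)-J^{(1/\lambda-1)/m}(\rho_{k+1})$ (using the two-sided control $2J(\rho_k)\le J(\rho_{k+1})\le 4J(\rho_k)$ built into its stopping rule) and sums directly.
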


\begin{proof}
We construct a finite sequence of real numbers $\{ \rho_i \}_{i=0}^k$.
Let $\rho_0 = r_1$. Assume further that $\rho_i$ is already defined. 
If $4 J (\rho_i) \ge J (r_2)$, we put $r_{i + 1} = r_2$, $k = i + 1$ and stop.
Otherwise we take $\rho_{i+1} \in (\rho_i, r_2)$ such that $J (\rho_{i+1}) = 2 J (\rho_i)$.
Since $J$ is a continuous function, such a real number $\rho_{i+1}$ obviously exists.
It is also clear that this procedure must terminate at a finite step. 

From the construction of the sequence $\{ \rho_i \}_{i=0}^k$, it follows that
\begin{equation}
	r_1 \le \rho_i < \rho_{i+1} \le r_2
	\quad
	\mbox{and}
	\quad
	2 J (\rho_i) \le J (\rho_{i+1}) \le 4 J (\rho_i),
	\quad
	i = 0, \ldots, k - 1.
	\label{PL3.2.1}
\end{equation}

By Lemma~\ref{L3.1}, for all $0 \le i \le k - 1$ we obtain
$$
	J (\rho_{i+1}) - J (\rho_i)
	\ge
	C
	(\rho_{i+1} - \rho_i)^{\lambda m}
	\rho_i^{(1 - \lambda) n}
	\sup_{(\rho_i, \rho_{i+1})}
	h
	\,
	J^\lambda (\rho_i)
$$
or, in other words,
\begin{equation}
	\left(
		\frac{
			J (\rho_{i+1}) - J (\rho_i)
		}{
			J^\lambda (\rho_i)
		}
	\right)^{1 / (\lambda m)}
	\ge
	C
	(\rho_{i+1} - \rho_i)
	\rho_i^{(1 / \lambda - 1) n / m}
	\sup_{
		(\rho_i, \rho_{i+1})
	}
	h^{1 / (\lambda m)}.
	\label{PL3.2.2}
\end{equation}
Due to~\eqref{PL3.2.1} the following inequalities hold:
\begin{align}
	&
	J^{(1 / \lambda - 1) / m} (\rho_i)
	-
	J^{(1 / \lambda - 1) / m} (\rho_{i+1})
	\nonumber
	\\
	&
	{}
	\qquad
	\ge
	C
	\left(
		\frac{
			J (\rho_{i+1}) - J (\rho_i)
		}{
			J^\lambda (\rho_i)
		}
	\right)^{1 / (\lambda m)},
	\quad
	i = 0, \ldots, k - 1.
	\label{PL3.2.3}
\end{align}
Moreover, since $\rho \mapsto \rho^{(1 / \lambda - 1) / m}$
is a decreasing function on the interval $(0, \infty)$, it is obvious that
\begin{align}
	&
	(\rho_{i+1} - \rho_i)
	\rho_i^{(1 / \lambda - 1) n / m}
	\sup_{
		(\rho_i, \rho_{i+1})
	}
	h^{1 / (\lambda m)}
	\nonumber
	\\
	&
	{}
	\qquad
	\ge
	\int_{
		\rho_i
	}^{
		\rho_{i+1}
	}
	r^{(1 / \lambda - 1) n / m}
	h^{1 / (\lambda m)} (r)
	\,
	dr,
	\quad
	i = 0, \ldots, k - 1.
	\label{PL3.2.4}
\end{align}
Combining~\eqref{PL3.2.2}, \eqref{PL3.2.3}, and~\eqref{PL3.2.4}, we have
\begin{align*}
	&
	J^{(1 / \lambda - 1) / m} (\rho_i)
	-
	J^{(1 / \lambda - 1) / m} (\rho_{i+1})
	\\
	&
	{}
	\qquad
	\ge
	C
	\int_{
		\rho_i
	}^{
		\rho_{i+1}
	}
	r^{(1 / \lambda - 1) n / m}
	h^{1 / (\lambda m)} (r)
	\,
	dr,
	\quad
	i = 0, \ldots, k - 1.
\end{align*}
Finally, summing the last inequalities over all $0 \le i \le k - 1$, we complete the proof.
\end{proof}

\begin{Lemma}\label{L3.3}
Let $0 < r_1 < r_2$ be some real numbers such that
$r_2 = \sqrt{\sigma} r_1$ and $J (r_2) \le 2 J (r_1)$.
If $J (r_1) > 0$, then
$$
	J^{1 - \lambda} (r_1)
	- 
	J^{1 - \lambda} (r_2)
	\ge
	C
	\int_{r_1}^{r_2}
	r^{(m - n) \lambda + n - 1}
	h (r)
	\,
	dr.
$$
\end{Lemma}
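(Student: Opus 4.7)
The plan is to apply Lemma~\ref{L3.1} once to the single interval $(r_1, r_2)$ and convert the resulting bound using the slow-growth hypothesis $J(r_2) \le 2 J(r_1)$. Since $r_2 = \sqrt{\sigma} r_1$ satisfies the hypothesis of Lemma~\ref{L3.1} and $r_2 - r_1 = (\sqrt{\sigma} - 1) r_1$, that lemma yields
$$
	J (r_2) - J (r_1)
	\ge
	C
	r_1^{\lambda m + (1 - \lambda) n}
	\sup_{(r_1, r_2)} h
	\cdot
	J^\lambda (r_1).
$$

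Next I invoke the elementary identity
$$
	J^{1 - \lambda} (r_1) - J^{1 - \lambda} (r_2)
	=
	(\lambda - 1) \int_{J (r_1)}^{J (r_2)} t^{- \lambda} \, dt
	\ge
	C J (r_1)^{- \lambda} \bigl( J (r_2) - J (r_1) \bigr),
$$
where the last inequality uses $t \le J (r_2) \le 2 J (r_1)$ throughout the domain of integration. Combining the two displays, the factor $J^\lambda (r_1)$ cancels and I obtain
$$
	J^{1 - \lambda} (r_1) - J^{1 - \lambda} (r_2)
	\ge
	C
	r_1^{\lambda m + (1 - \lambda) n}
	\sup_{(r_1, r_2)} h.
$$

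To match this lower bound with the target integral, I observe that every $r \in (r_1, \sqrt{\sigma} r_1)$ is comparable to $r_1$, so a direct computation yields $\int_{r_1}^{r_2} r^{(m - n) \lambda + n - 1} \, dr \le C r_1^{(m - n) \lambda + n}$, with the borderline case of vanishing exponent producing a logarithmic factor $\tfrac{1}{2} \log \sigma$ that is absorbed into $C$. Since $(m - n) \lambda + n = \lambda m + (1 - \lambda) n$ and $h (r) \le \sup_{(r_1, r_2)} h$ on the interval, the desired estimate follows. There is no serious obstacle in this lemma: it is essentially Lemma~\ref{L3.1} in disguise, with the only nontrivial ingredient being the elementary convexity move converting the factor $J^\lambda (r_1)$ into a difference of $J^{1 - \lambda}$ values, which plays the role in the slow-growth regime analogous to the corresponding step of Lemma~\ref{L3.2} in the fast-growth regime $J (r_2) \ge 2 J (r_1)$.
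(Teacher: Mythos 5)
Your proof is correct and follows essentially the same route as the paper: apply Lemma~\ref{L3.1} to the single interval, use the slow-growth bound $J(r_2)\le 2J(r_1)$ to convert the difference $J(r_2)-J(r_1)$ into the difference of $J^{1-\lambda}$ values, and replace $r_1^{(m-n)\lambda+n}\sup h$ by the target integral using $r\asymp r_1$. The only cosmetic difference is that you bound the integrand in $\int_{J(r_1)}^{J(r_2)} t^{-\lambda}\,dt$ directly, whereas the paper invokes Lagrange's mean value theorem to the same effect; both are the same elementary convexity step.
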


\begin{proof}
Lemma~\ref{L3.1} implies the estimate
\begin{equation}
	(J (r_2) - J (r_1))
	J^{- \lambda} (r_1)
	\ge
	C
	(r_2 - r_1)^{\lambda m}
	r_1^{(1 - \lambda) n}
	\sup_{(r_1, r_2)}
	h.
	\label{PL3.3.1}
\end{equation}
By Lagrange's average value theorem, we obtain
$$
	J^{1 - \lambda} (r_1)
	- 
	J^{1 - \lambda} (r_2)
	=
	(\lambda - 1)
	(J (r_2) - J (r_1))
	R^{- \lambda}.
$$
for some real number $R \in (J (r_1), J (r_2))$.
Since $R \le 2 J (r_1)$, this allows us to assert that
$$
	J^{1 - \lambda} (r_1)
	- 
	J^{1 - \lambda} (r_2)
	\ge
	C
	(J (r_2) - J (r_1))
	J^{- \lambda} (r_1).
$$
Taking into account the condition $r_2 = \sqrt{\sigma} r_1$, we also have
$$
	(r_2 - r_1)^{\lambda m}
	r_1^{(1 - \lambda) n}
	\sup_{(r_1, r_2)}
	h
	\ge
	C
	\int_{r_1}^{r_2}
	r^{(m - n) \lambda + n - 1}
	h (r)
	\,
	dr.
$$
In virtue of~\eqref{PL3.3.1} the last two inequalities readily yield the desirable estimate.
\end{proof}

\begin{Lemma}[see~{\cite[Lemma~2.6]{meIzv}}]\label{L3.4}
Let
$0 < \alpha \le 1$,
$\varkappa > 1$,
$\nu > 1$,
$r_1 > 0$,
and
$r_2 > 0$
be real numbers with
$r_2 \ge \nu r_1$.
Then
$$
	\left(
		\int_{r_1}^{r_2}
		\psi^\alpha (r)
		\,
		dr
	\right)^{1 / \alpha}
	\ge
	A
	\int_{r_1}^{r_2}
	r^{1 / \alpha - 1}
	\gamma (r)
	\,
	dr
$$
for any measurable function $\psi : [r_1, r_2] \to [0,\infty)$,
where 
$$
	\gamma (r)
	=
	\essinf_{
		(r / \varkappa, \varkappa r)
	}
	\psi
$$
and $A > 0$ is a constant depending only on $\alpha$, $\varkappa$, and $\nu$.
\end{Lemma}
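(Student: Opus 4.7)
The plan is to exploit the fact that $1/\alpha \ge 1$ (since $0 < \alpha \le 1$) via the elementary superadditivity inequality
$$\Bigl(\sum_k a_k\Bigr)^{1/\alpha} \ge \sum_k a_k^{1/\alpha}, \qquad a_k \ge 0,$$
combined with a geometric (multiplicative) decomposition of the interval $[r_1, r_2]$ at scale $\varkappa$. The role of the hypothesis $r_2 \ge \nu r_1$ with $\nu > 1$ is to guarantee enough full geometric intervals that boundary losses can be absorbed into the final constant $A = A(\alpha, \varkappa, \nu)$.

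Concretely, I would set $t_k = r_1 \varkappa^k$ and let $N$ be the largest integer with $t_N \le r_2$, obtaining subintervals $I_k = [t_k, t_{k+1}]$ for $k = 0, \ldots, N-1$ together with the (possibly shorter) terminal piece $[t_N, r_2]$. The crucial geometric observation is that for every $r \in I_k$ one has $r/\varkappa \le t_k$ and $\varkappa r \ge t_{k+1}$, hence $I_k \subset (r/\varkappa, \varkappa r)$, so
$$\gamma(r) = \essinf_{(r/\varkappa, \varkappa r)} \psi \le M_k, \qquad M_k := \essinf_{I_k} \psi.$$
Using $\gamma \le M_k$ on $I_k$ I would then compare the two sides on each dyadic block:
$$\int_{I_k} r^{1/\alpha - 1} \gamma(r)\, dr \le \alpha M_k \bigl(t_{k+1}^{1/\alpha} - t_k^{1/\alpha}\bigr) = \alpha (\varkappa^{1/\alpha} - 1) M_k t_k^{1/\alpha},$$
$$\Bigl(\int_{I_k} \psi^\alpha(r)\, dr\Bigr)^{1/\alpha} \ge M_k |I_k|^{1/\alpha} = (\varkappa - 1)^{1/\alpha} M_k t_k^{1/\alpha},$$
which gives a per-interval ratio bounded below by $C_{\alpha,\varkappa} = (\varkappa - 1)^{1/\alpha} / [\alpha(\varkappa^{1/\alpha} - 1)]$.

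Assembly is then immediate: applying the superadditivity inequality with $a_k = \int_{I_k} \psi^\alpha\, dr$,
$$\Bigl(\int_{r_1}^{r_2} \psi^\alpha\, dr\Bigr)^{1/\alpha} \ge \sum_k \Bigl(\int_{I_k} \psi^\alpha\, dr\Bigr)^{1/\alpha} \ge C_{\alpha,\varkappa} \sum_k \int_{I_k} r^{1/\alpha - 1}\gamma(r)\, dr = C_{\alpha, \varkappa} \int_{r_1}^{r_2} r^{1/\alpha - 1} \gamma(r)\, dr.$$
The only real obstacle is the careful bookkeeping at the endpoints, specifically the truncated terminal block $[t_N, r_2]$: on that piece one loses a factor depending on how close $r_2$ is to $t_N$ versus $t_{N+1}$, but since $r_2/r_1 \ge \nu$ guarantees $N \ge \lfloor \log \nu / \log \varkappa \rfloor$, the relative contribution of the terminal block is controlled by a function of $\alpha, \varkappa, \nu$ alone, and can be swept into $A$. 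Everything else reduces to the two elementary estimates on $I_k$ and the superadditivity inequality, so no sharp analysis beyond this is needed.
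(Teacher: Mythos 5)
The paper itself offers no proof of Lemma~\ref{L3.4}: it is imported verbatim from \cite[Lemma~2.6]{meIzv}, so there is no internal argument to compare yours against, and your proposal has to be judged on its own. The core of it is sound: the containment $I_k\subset(r/\varkappa,\varkappa r)$ for $r\in I_k=[t_k,t_{k+1}]$, the two per-block estimates, and the superadditivity $(\sum a_k)^{1/\alpha}\ge\sum a_k^{1/\alpha}$ for $1/\alpha\ge1$ are all correct, and they do yield the inequality with $\int_{r_1}^{r_2}$ replaced by $\int_{r_1}^{t_N}$ on the right-hand side.

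The one genuine weak point is the terminal block $[t_N,r_2]$, and the mechanism you invoke to dismiss it does not work as stated. Having $N\ge\lfloor\log\nu/\log\varkappa\rfloor$ many full blocks controls nothing: $r_2-t_N$ can be arbitrarily small (even zero) while $\essinf_{[t_N,r_2]}\psi$ is large, so the terminal block cannot absorb its own contribution the way the $I_k$ do, and comparing it to ``how many full blocks there are'' is not the right currency. The correct repair stays inside your framework but borrows mass from the adjacent window: if $N\ge1$, then for every $r\in[t_N,r_2]$ one has $(r/\varkappa,\varkappa r)\supset(r_2/\varkappa,r_2)\subset[r_1,r_2]$, so with $M:=\essinf_{(r_2/\varkappa,r_2)}\psi$ one gets $\gamma(r)\le M$ on $[t_N,r_2]$, hence
$$
	\int_{t_N}^{r_2}r^{1/\alpha-1}\gamma(r)\,dr
	\le
	\alpha\bigl(\varkappa^{1/\alpha}-1\bigr)M\,r_2^{1/\alpha},
	\qquad
	\Bigl(\int_{r_1}^{r_2}\psi^\alpha\,dr\Bigr)^{1/\alpha}
	\ge
	M\,r_2^{1/\alpha}\bigl(1-1/\varkappa\bigr)^{1/\alpha},
$$
which bounds the terminal contribution by $C(\alpha,\varkappa)$ times the left-hand side of the lemma; adding this to the full-block bound finishes the proof. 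If instead $N=0$ (possible when $\nu<\varkappa$), there are no full blocks at all and the same comparison applied to the whole interval works because $r_2-r_1\ge(\nu-1)r_1$ --- this degenerate case, not the count of blocks, is where the hypothesis $r_2\ge\nu r_1$ and the dependence of $A$ on $\nu$ actually enter. With these two cases written out, your argument is a complete and elementary proof of the lemma.
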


\begin{proof}[Proof of Theorem~$\ref{T2.2}$]
Assume the converse. Then there is a real number $r_0 > 0$ such that $J (r_0) > 0$.
Let us put $r_i = \sigma^{i/2} r_0$, $i = 1,2,\ldots$.
We denote by $\Xi_1$ the set of non-negative integers $i$ for which $J (r_{i + 1}) \ge 2 J (r_i)$. 
In so doing, let $\Xi_2$ be the set of all other non-negative integers.

If $i \in \Xi_1$, then in accordance with Lemma~\ref{L3.2} we obtain
\begin{equation}
	J^{(1 / \lambda - 1) / m} (r_i)
	-
	J^{(1 / \lambda - 1) / m} (r_{i + 1})
	\ge
	C
	\int_{
		r_i
	}^{
		r_{i + 1}
	}
	r^{(1 / \lambda - 1) n / m}
	h^{1 / (\lambda m)} (r)
	\,
	dr.
	\label{PT2.2.1}
\end{equation}
In turn, if $i \in \Xi_2$, then Lemma~\ref{L3.3} implies the estimate
\begin{equation}
	J^{1 - \lambda} (r_i)
	- 
	J^{1 - \lambda} (r_{i + 1})
	\ge
	C
	\int_{
		r_i
	}^{
		r_{i + 1}
	}
	r^{(m - n) \lambda + n - 1}
	h (r)
	\,
	dr.
	\label{PT2.2.2}
\end{equation}

Let us suppose that
\begin{equation}
	\sum_{i \in \Xi_1}
	\int_{
		r_i
	}^{
		r_{i + 1}
	}
	r^{(m - n) \lambda + n - 1}
	q (r)
	\,
	dr
	=
	\infty,
	\label{PT2.2.3}
\end{equation}
where $q$ is defined by~\eqref{T2.2.1}.
Then, summing inequalities~\eqref{PT2.2.1} over all $i \in \Xi_1$, we obtain
$$
	J^{(1 / \lambda - 1) / m} (r_0)
	\ge
	C
	\sum_{i \in \Xi_1}
	\int_{
		r_i
	}^{
		r_{i + 1}
	}
	r^{(1 / \lambda - 1) n / m}
	h^{1 / (\lambda m)} (r)
	\,
	dr,
$$
whence in virtue of $\lambda m > 1$ it follows that
\begin{align}
	J^{1 - \lambda} (r_0)
	&
	\ge
	C
	\left(
		\sum_{i \in \Xi_1}
		\int_{
			r_i
		}^{
			r_{i + 1}
		}
		r^{(1 / \lambda - 1) n / m}
		h^{1 / (\lambda m)} (r)
		\,
		dr
	\right)^{\lambda m}
	\nonumber
	\\
	&
	\ge
	C
	\sum_{i \in \Xi_1}
	\left(
		\int_{
			r_i
		}^{
			r_{i + 1}
		}
		r^{(1 / \lambda - 1) n / m}
		h^{1 / (\lambda m)} (r)
		\,
		dr
	\right)^{\lambda m}.
	\label{PT2.2.4}
\end{align}
Using Lemma~\ref{L3.4} with 
$\psi (r) = r^{(1 - \lambda) n} h (r)$, 
$\alpha = 1 / (\lambda m)$, 
and $\varkappa = \nu = \sqrt{\sigma}$, 
we obtain
$$
	\left(
		\int_{
			r_i
		}^{
			r_{i + 1}
		}
		r^{(1 / \lambda - 1) n / m}
		h^{1 / (\lambda m)} (r)
		\,
		dr
	\right)^{\lambda m}
	\ge
	C
	\int_{
		r_i
	}^{
		r_{i + 1}
	}
	r^{(m - n) \lambda + n - 1}
	q (r)
	\,
	dr
$$
for all $i \in \Xi_1$.
Hence,~\eqref{PT2.2.4} implies the inequality
$$
	J^{1 - \lambda} (r_0)
	\ge
	C
	\sum_{i \in \Xi_1}
	\int_{
		r_i
	}^{
		r_{i + 1}
	}
	r^{(m - n) \lambda + n - 1}
	q (r)
	\,
	dr
$$
whose right-hand side is equal to infinity while the left-hand side is bounded.
We obviously arrive at a contradiction.
Therefore,~\eqref{PT2.2.3} can not be valid, and due to condition~\eqref{T2.1} we have
\begin{equation}
	\sum_{i \in \Xi_2}
	\int_{
		r_i
	}^{
		r_{i + 1}
	}
	r^{(m - n) \lambda + n - 1}
	q (r)
	\,
	dr
	=
	\infty.
	\label{PT2.2.5}
\end{equation}
Summing inequalities~\eqref{PT2.2.2} over all $i \in \Xi_2$, one can conclude that
$$
	J^{1 - \lambda} (r_0)
	\ge
	C
	\sum_{i \in \Xi_2}
	\int_{
		r_i
	}^{
		r_{i + 1}
	}
	r^{(m - n) \lambda + n - 1}
	h (r)
	\,
	dr.
$$
Since $h (r) \ge q (r)$ for all $r \in (0, \infty)$, this yields
$$
	J^{1 - \lambda} (r_0)
	\ge
	C
	\sum_{i \in \Xi_2}
	\int_{
		r_i
	}^{
		r_{i + 1}
	}
	r^{(m - n) \lambda + n - 1}
	q (r)
	\,
	dr.
$$
In virtue of~\eqref{PT2.2.5} the last inequality again leads us to a contradiction.
\end{proof}

\begin{proof}[Proof of Theorem~$\ref{T2.1}$]
We note that
$$
	\left(
		\frac{1}{r^n}
		\int_{
			B_{\sigma r}
			\setminus
			B_{r / \sigma} 
		}
		a^{\lambda / (\lambda - 1)} (x)
		b^{- 1 / (\lambda - 1)} (x)
		\,
		dx
	\right)^{1 - \lambda}
	\ge
	C
	\essinf_{
		B_{\sigma r}
		\setminus
		B_{r / \sigma} 
	}
	a^{- \lambda} b
$$
for all $r \in (0, \infty)$. Thus, Theorem~\ref{T2.1} follows immediately from Theorem~\ref{T2.2}.
\end{proof}

\section{Generalizations}

The condition of strict positivity of the function $a$ can be omitted. Instead, we assume that
$$
	b (x) \ge a^\lambda (x) f (x) 
$$
with some non-negative function $f \in L_{\infty, loc} ({\mathbb R}^n)$
for almost all $x \in {\mathbb R}^n$.

\begin{Theorem}\label{T4.1}
Let $\lambda > 1$ and~\eqref{T2.1.1} be valid, where
$$
	q (r)
	=
	\essinf_{
		B_{\sigma r}
		\setminus
		B_{r / \sigma} 
	}
	f
$$
for some real number $\sigma > 1$.
Then any solution of~\eqref{1.1} is trivial.
\end{Theorem}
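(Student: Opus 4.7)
The plan is to rerun the proof of Theorem~\ref{T2.2} with the weight $a^{\lambda / (\lambda - 1)} b^{- 1 / (\lambda - 1)}$ that appears after the H\"older step in Lemma~\ref{L3.1} replaced throughout by the purely geometric weight $f^{- 1 / (\lambda - 1)}$. The driving pointwise observation is that the hypothesis $b \ge a^\lambda f$ yields
$$
	a^{\lambda / (\lambda - 1)} (x) \, b^{- 1 / (\lambda - 1)} (x)
	\le
	f^{- 1 / (\lambda - 1)} (x)
	\quad
	\mbox{for a.e. } x \in {\mathbb R}^n,
$$
with the natural convention $t^{- 1 / (\lambda - 1)} = \infty$ at $t = 0$. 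Indeed, raising $a^\lambda f \le b$ to the power $1 / (\lambda - 1)$ and dividing by $b^{1 / (\lambda - 1)}$ proves this where $f > 0$, and it is trivial where $f = 0$.

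First I would insert this pointwise bound into the H\"older step in the proof of Lemma~\ref{L3.1}; this upgrades the lemma to one whose conclusion~\eqref{L3.1.1} holds with $h$ replaced by
$$
	\tilde h (r)
	=
	\left(
		\frac{1}{r^n}
		\int_{B_{\sqrt{\sigma} r} \setminus B_{r / \sqrt{\sigma}}}
		f^{- 1 / (\lambda - 1)} (x)
		\,
		dx
	\right)^{1 - \lambda},
$$
with $\tilde h (r) := 0$ whenever the integral diverges. Lemmas~\ref{L3.2} and~\ref{L3.3} and the proof of Theorem~\ref{T2.2} invoke Lemma~\ref{L3.1} only through~\eqref{L3.1.1}, so they carry over verbatim, producing the statement that $u \equiv 0$ whenever~\eqref{T2.1.1} holds with
$$
	\tilde q (r)
	=
	\left(
		\frac{1}{r^n}
		\int_{B_{\sigma r} \setminus B_{r / \sigma}}
		f^{- 1 / (\lambda - 1)} (x)
		\,
		dx
	\right)^{1 - \lambda}
$$
in place of $q$.

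To close the argument I would compare $\tilde q$ with the essential infimum $q$ of Theorem~\ref{T4.1}. If $q (r) > 0$, then $f \ge q (r)$ a.e.\ on $B_{\sigma r} \setminus B_{r / \sigma}$, whence $f^{- 1 / (\lambda - 1)} \le q (r)^{- 1 / (\lambda - 1)}$ a.e.\ on the same annulus, and
$$
	\frac{1}{r^n}
	\int_{B_{\sigma r} \setminus B_{r / \sigma}}
	f^{- 1 / (\lambda - 1)} (x)
	\,
	dx
	\le
	C \, q (r)^{- 1 / (\lambda - 1)}.
$$
Raising to the (negative) power $1 - \lambda$ gives $\tilde q (r) \ge C \, q (r)$, so divergence of~\eqref{T2.1.1} for the essinf $q$ forces divergence for $\tilde q$, and the modified Theorem~\ref{T2.2} delivers $u \equiv 0$.

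The only real hurdle is the bookkeeping of degenerate strata: $a = 0$ on a set of positive measure, $f = 0$ on part of an annulus, or $\int f^{- 1 / (\lambda - 1)} \, dx = \infty$ across an annulus. In each such case the relevant quantity ($\tilde h (r)$ or $\tilde q (r)$) vanishes and the associated inequality trivialises, exactly as in Remark~\ref{R2.1} and in the ``$\int = \infty$'' branch already treated in the proof of Lemma~\ref{L3.1}, so no new ideas are required beyond faithful transcription.
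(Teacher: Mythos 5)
Your proof is correct and follows essentially the same route as the paper: the paper also reduces Theorem~\ref{T4.1} to a Theorem~\ref{T2.2}-type statement with $a^{\lambda/(\lambda-1)}b^{-1/(\lambda-1)}$ replaced by $f^{-1/(\lambda-1)}$, re-establishes Lemma~\ref{L3.1} for the new $h$, and then uses the trivial comparison between the normalized integral-power quantity and the essential infimum. The only cosmetic difference is the order of operations in the H\"older step — the paper first uses $b \ge a^\lambda f$ to pass from $\int a|u|$ to $\int f^{-1/\lambda}b^{1/\lambda}|u|$ and then applies H\"older with the split $f^{-1/\lambda}\cdot(b^{1/\lambda}|u|)$, while you keep the original H\"older split and bound the resulting factor pointwise via $a^{\lambda/(\lambda-1)}b^{-1/(\lambda-1)} \le f^{-1/(\lambda-1)}$ — but these are the same computation.
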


\begin{Theorem}\label{T4.2}
Let $\lambda > 1$ and~\eqref{T2.1.1} be valid, where
$$
	q (r)
	=
	\left(
		\frac{1}{r^n}
		\int_{
			B_{\sigma r}
			\setminus
			B_{r / \sigma} 
		}
		f^{- 1 / (\lambda - 1)} (x)
		\,
		dx
	\right)^{1 - \lambda}
$$
for some real number $\sigma > 1$.
Then any solution of~\eqref{1.1} is trivial.
\end{Theorem}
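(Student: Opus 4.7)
The plan is to mirror the proof of Theorem~\ref{T2.2} almost verbatim, with a single substitution inside Lemma~\ref{L3.1}; the rest of the machinery (Lemmas~\ref{L3.2}--\ref{L3.4} and the $\Xi_1/\Xi_2$ iteration used to prove Theorem~\ref{T2.2}) is purely about the interplay of $J$, the inner weight $h$, and the outer weight $q$, and will transfer without change.

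First I would inspect where the growth bound~\eqref{1.2} actually enters the proof of Lemma~\ref{L3.1}: it is used only in the H\"older step applied to the cutoff integral $\int a(x)|u|\,dx$. The new hypothesis $b(x) \ge a^\lambda(x) f(x)$ delivers the pointwise comparison
$$
	a^{\lambda / (\lambda - 1)} (x) \, b^{- 1 / (\lambda - 1)} (x)
	\le
	f^{- 1 / (\lambda - 1)} (x)
$$
for almost every $x$, with the natural conventions at points where $a$ or $f$ vanishes. Substituting this bound into the H\"older step produces the obvious variant of Lemma~\ref{L3.1} in which $h$ is replaced by
$$
	\tilde{h} (r)
	=
	\left(
		\frac{1}{r^n}
		\int_{
			B_{\sqrt{\sigma} r}
			\setminus
			B_{r / \sqrt{\sigma}}
		}
		f^{- 1 / (\lambda - 1)} (x) \, dx
	\right)^{1 - \lambda}.
$$

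With this variant of Lemma~\ref{L3.1} at hand, Lemmas~\ref{L3.2} and~\ref{L3.3} go through unchanged, since their derivations manipulate only $J$ and the inner weight. The main iteration in the proof of Theorem~\ref{T2.2} then applies verbatim: split the geometric radii $r_i = \sigma^{i/2} r_0$ into sets $\Xi_1, \Xi_2$ according to whether $J$ doubles, apply Lemma~\ref{L3.4} with $\psi(r) = r^{(1 - \lambda) n} \tilde{h}(r)$, $\alpha = 1/(\lambda m)$, $\varkappa = \nu = \sqrt{\sigma}$ to convert $\tilde{h}$ on half-annuli to $q$ on annuli of ratio $\sigma$, and arrive at a contradiction with~\eqref{T2.1.1}.

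The one delicate point I expect is the bookkeeping around the sets where $a$ or $f$ vanishes: if $f^{- 1 / (\lambda - 1)}$ fails to be locally integrable across the relevant annulus, one must set $\tilde{h}(r) = q(r) = 0$ exactly as in Remark~\ref{R2.1}, and verify that the pointwise comparison above and the H\"older estimate remain valid in the measurable sense. Beyond this routine check, no new analytic idea is required.
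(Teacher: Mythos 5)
Your proposal is correct and matches the paper's own argument: the paper likewise reduces Theorem~\ref{T4.2} to reproving Lemma~\ref{L3.1} with $h$ built from $f^{-1/(\lambda-1)}$, using $b \ge a^\lambda f$ in the H\"older step (the paper bounds $a \le f^{-1/\lambda} b^{1/\lambda}$ before applying H\"older, while you bound the H\"older factor $a^{\lambda/(\lambda-1)} b^{-1/(\lambda-1)} \le f^{-1/(\lambda-1)}$ afterwards --- an immaterial difference), and then runs the $\Xi_1/\Xi_2$ iteration of Theorem~\ref{T2.2} unchanged. Your handling of the degenerate case $\int f^{-1/(\lambda-1)}\,dx = \infty$ via the convention $h(r)=q(r)=0$ is also exactly what the paper does.
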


It can be seen that
$$
	\left(
		\frac{1}{r^n}
		\int_{
			B_{\sigma r}
			\setminus
			B_{r / \sigma} 
		}
		f^{- 1 / (\lambda - 1)} (x)
		\,
		dx
	\right)^{1 - \lambda}
	\ge
	C
	\essinf_{
		B_{\sigma r}
		\setminus
		B_{r / \sigma} 
	}
	f
$$
for all $r \in (0, \infty)$. Thus, Theorem~\ref{T4.1} is a consequence of Theorem~\ref{T4.2}. 
In turn, to prove Theorem~\ref{T4.2}, 
it is sufficient to establish the validity of Lemma~\ref{L3.1} with
$$
	h (r)
	=
	\left(
		\frac{1}{r^n}
		\int_{
			B_{\sqrt{\sigma} r}
			\setminus
			B_{r / \sqrt{\sigma}} 
		}
		f^{- 1 / (\lambda - 1)} (x)
		\,
		dx
	\right)^{1 - \lambda}.
$$
Really, in the case of
$$
	\int_{
		B_{r_2} \setminus B_{r_1}
	}
	f^{- 1 / (\lambda - 1)} (x)
	\,
	dx
	=
	\infty,
$$
Lemma~\ref{L3.1} is evident since $h (r) = 0$ for all $r \in (r_1, r_2)$.
If
$$
	\int_{
		B_{r_2} \setminus B_{r_1}
	}
	f^{- 1 / (\lambda - 1)} (x)
	\,
	dx
	<
	\infty,
$$
then~\eqref{PL3.1.1} implies the estimate
\begin{equation}
	\int_{
		B_{r_2} \setminus B_{r_1}
	}
	f^{- 1 / \lambda} (x)
	b^{1 / \lambda} (x)
	|u|
	\,
	dx
	\ge
	C
	(r_2 - r_1)^m
	\int_{
		B_{r_1}
	}
	b (x)
	|u|^\lambda
	\,
	dx.
	\label{4.1}
\end{equation}
By the H\"older inequality, we obtain
\begin{align*}
	\int_{
		B_{r_2} \setminus B_{r_1}
	}
	f^{- 1 / \lambda} (x)
	b^{1 / \lambda} (x)
	|u|
	\,
	dx
	\le
	{}
	&
	\left(
		\int_{
			B_{r_2} \setminus B_{r_1}
		}
		f^{- 1 / (\lambda - 1)} (x)
		\,
		dx
	\right)^{(\lambda - 1) / \lambda}
	\\
	{}
	&
	\times
	\left(
		\int_{
			B_{r_2} \setminus B_{r_1}
		}
		b (x)
		|u|^\lambda
		\,
		dx
	\right)^{1 / \lambda},
\end{align*}
whence in accordance with~\eqref{4.1} it follows that
\begin{align*}
	\int_{
		B_{r_2}
		\setminus
		B_{r_1} 
	}
	b (x)
	|u|^\lambda
	\,
	dx
	\ge
	{}
	&
	C
	(r_2 - r_1)^{\lambda m}
	\left(
		\int_{
			B_{r_2}
			\setminus
			B_{r_1} 
		}
		f^{- 1 / (\lambda - 1)} (x)
		\,
		dx
	\right)^{1 - \lambda}
	\\
	{}
	&
	\times
	\left(
		\int_{
			B_{r_1}
		}
		b (x)
		|u|^\lambda
		\,
		dx
	\right)^\lambda.
\end{align*}
This obviously proves~\eqref{L3.1.1}.

Finally, note that, instead of~\eqref{1.2}, we can consider the inequality
$$
	|a_\alpha (x, \zeta)| \le a (x) |\zeta|^p,
$$
where $p > 0$ is some real number, as was done in~\cite{MPbook}.
However, this does not increase the generality since
we can always use the replacement $v =  |u|^p \sign u$.


\begin{thebibliography}{100}
\bibitem{FS}
A. Farina, J. Serrin,
Entire solutions of completely coercive quasilinear elliptic equations,
J. Diff. Eq. 250 (2011) 4367--4408.
\bibitem{FS2}
A. Farina, J. Serrin,
Entire solutions of completely coercive quasilinear elliptic equations, II,
J. Diff. Eq. 250 (2011) 4409--4436.
\bibitem{FPR2008}
R. Filippucci, P. Pucci, M. Rigoli,
Non-existence of entire solutions of degenerate elliptic inequalities with weights,
Arch. Ration. Mech. Anal. 188 (2008) 155--179;
Erratum, 188 (2008) 181.
\bibitem{F}
R. Filippucci,
Nonexistence of positive entire weak solutions of elliptic inequalities,
Nonlinear Anal. 70 (2009) 2903--2916.
\bibitem{FPR2009}
R. Filippucci, P. Pucci, M. Rigoli,
On entire solutions of degenerate elliptic differential inequalities with nonlinear 
gradient terms,
J. Math. Anal. Appl. 356 (2009) 689--697.
\bibitem{GMP}
V. A. Galaktionov, E. L. Mitidieri, S. I. Pohozaev, 
Blow-up for higher-order parabolic, hyperbolic, dispersion and Schroedinger equations, 
Monographs and Research Notes in Mathematics, CRC Press, Boca Raton, FL, 2014.
\bibitem{GVADE}
V. A. Galaktionov, J. L. Vazquez,
Blow-up of a class of solutions with free boundaries for the Navier-Stokes equations,
Adv. Diff. Eq. 4 (1997) 291--321.
\bibitem{GVARMA}
V. A. Galaktionov, J. L. Vazquez,
Necessary and sufficient conditions of complete blow-up and extinction for one-dimensional 
quasilinear heat equations,
Arch. Ration. Mech. Anal. 129 (1996) 225--244.
\bibitem{Keller}
J.B. Keller,
On solution of $\triangle u = f(u)$.
Comm. Pure Appl. Math.
10 (1957) 503--510.
\bibitem{KL}
V.A. Kondratiev, E.M. Landis,
Qualitative properties of the solutions of a second-order nonlinear equations,
Mat. Sb. 135 (1988) 346--360.
\bibitem{meNONANA}
A.A.~Kon'kov,
On properties of solutions of quasilinear second-order elliptic inequalities, 
Nonlinear Analysis 123--124 (2015), 89--114. 
\bibitem{meIzv}
A.A.~Kon'kov,
On solutions of non-autonomous ordinary differential equations,
Izv. Math. 65 (2001) 285--327.
\bibitem{MPbook}
E. Mitidieri, S.I. Pohozaev,
A priori estimates and blow-up of solutions to nonlinear partial
differential equations and inequalities,
Proc. V.A.~Steklov Inst. Math. 234 (2001) 3--383.
\bibitem{MSh}
M. Marcus, A.E. Shishkov,
Fading absorption in non-linear elliptic equations,
Ann. Inst. H.~Poincar\'e Anal. Non Lineaire 30 (2013) 315--336.
\bibitem{NUMZ}
Y. Naito, H. Usami,
Entire solutions of the inequality ${{\rm div} \, (A(|Du|)Du)} = {f (u)}$,
Math. Z. 225 (1997) 167--175.
\bibitem{NUCMB}
Y. Naito, H. Usami,
Nonexistence results of positive entire solutions for quasilinear elliptic inequalities,
Canad. Math. Bull. 40 (1997) 244--253.
\bibitem{Osserman}
R. Osserman,
On the inequality $\triangle u \ge f(u)$,
Pacific J. Math. 7 (1957) 1641--1647.
\bibitem{Veron}
L. Veron,
Comportement asymptotique des solutions d'equations elliptiques
semi-lineaires dans ${\mathbb  R}^n$,
Ann. Math. Pure Appl. 127 (1981) 25--50.
\end{thebibliography}
\end{document}